\definecolor{darkgreen}{rgb}{0,.5,0}
\numberwithin{equation}{section}
\newtheorem{theor}{Theorem}[section]
\newtheorem{lemma}[theor]{Lemma}
\newtheorem{corollary}[theor]{Corollary}
\newtheorem{rem}{Remark}[section]
\begin{document}
\footnotetext{
\emph{2000 Mathematics Subject Classification.} Primary 47B47, 46B04.

\emph{Key words and phrases.} $\mathrm{BMO}$ space, Characterization, Commutator, Lipschitz space, Morrey space.}

\title[]{Characterizations of the BMO and Lipschitz spaces via commutators on weak Lebesgue and Morrey spaces }

\author[]{Dinghuai Wang, Jiang Zhou$^\ast$ and Wenyi Chen}
\address{College of Mathematics and System Sciences \endgraf
         Xinjiang University \endgraf
         Urumqi 830046 \endgraf
         Republic of China}
\email{Wangdh1990@126.com; zhoujiangshuxue@126.com}
\address{School of Mathematics and Statistics \endgraf
         WuHan University \endgraf
         WuHan 430072 \endgraf
         Republic of China}
\email{wychencn@hotmail.com}
\thanks{The research was supported by National Natural Science Foundation
of China (Grant No.11261055). \\ \qquad * Corresponding author, Email: zhoujiangshuxue@126.com.}

\begin{abstract}
We prove that the weak Morrey space $WM^{p}_{q}$ is contained in the Morrey space $M^{p}_{q_{1}}$ for $1\leq q_{1}< q\leq p<\infty$. As applications, we show that if the commutator $[b,T]$ is bounded from $L^p$ to $L^{p,\infty}$ for some $p\in (1,\infty)$, then $b\in \mathrm{BMO}$, where $T$ is a Calder\'on-Zygmund operator. Also, for $1<p\leq q<\infty$, $b\in \mathrm{BMO}$ if and only if $[b,T]$ is bounded from $M^{p}_{q}$ to $WM_{q}^{p}$. For $b$ belonging to Lipschitz class, we obtain similar results.
\end{abstract}
\maketitle

\maketitle

\section{Introduction} \label{sec:int}
Let $T$ be a Calder\'{o}n-Zygmund operator defined by
$$Tf(x)=\mathrm{p.v.}\int_{\mathbb{R}^{n}}K(x-y)f(y)dy,$$
where the kernel $K(x)=\frac{\Omega(x)}{|x|^{n}}$ satisfies the following conditions:
\begin{enumerate}
\item [\rm(i)] $\Omega$ is homogeneous of degree zero on $\mathbb{R}^{n}$, i.e., $\Omega(\lambda x)=\Omega(x)$ for all $\lambda>0$ and $x\in \mathbb{R}^{n}$;
\item [\rm(ii)] $\Omega\in C^{\infty}(\mathbb{S}^{n-1})$ and $\int_{\mathbb{S}^{n-1}}\Omega(x)dx=0.$
\end{enumerate}
A locally integrable function $b$ belongs to the $\mathrm{BMO}$ space if $b$ satisfies
$$\|b\|_{*}:=\sup_{Q}\frac{1}{|Q|}\int_{Q}|b(x)-b_{Q}|dx<\infty,$$
where $b_{Q}:=\frac{1}{|Q|}\int_{Q}b(x)dx$ and the supremum is taken over all cubes $Q$ in $\mathbb{R}^{n}$. A well known result of Coifman, Rochberg and Weiss \cite{CRW} states that the commutator
$$[b,T](f):=bT(f)-T(bf)$$
is bounded on some $L^p$, $1<p<\infty$, if and only if $b\in \mathrm{BMO}$. An interesting question is raised. Is $b$ in $\mathrm{BMO}$ if $[b,T]$ is of weak type $(p,p)$ for some $p\in (1,\infty)$? We will give an affirmative answer in this paper.

For $1\leq q\leq p <\infty$, we say that a function $f$ belongs to Morrey space $M^{p}_{q}$ if
$$\|f\|_{M^{p}_{q}}:=\sup_{x\in \mathbb{R}^{n},r>0}\frac{1}{|Q(x,r)|^{1/q-1/p}}\bigg(\int_{Q(x,r)}|f(y)|^{q}dy\bigg)^{1/q}<\infty;$$
a function $f$ belongs to weak Morrey space $WM_{q}^{p}$ if
$$\|f\|_{WM_{q}^{p}}:=\sup_{x\in \mathbb{R}^{n},r>0}\frac{1}{|Q(x,r)|^{1/q-1/p}}\sup_{\lambda>0}\bigg(\lambda^{q}|\{y\in Q(x,r):|f(y)|>\lambda\}|\bigg)^{1/q}<\infty.$$
Morrey spaces describe local regularity more precisely than $L^{p}$ spaces and can be seen as a complement of $L^{p}$. In fact, $L^p=M^{p}_{p}\subset M^{p}_{q}$ and $WM^{p}_{p}=L^{p,\infty}$ for $1\leq q \leq p<\infty$. In 1997, Ding \cite{D} showed that $b$ is in $\mathrm{BMO}$ if and only if the commutator $[b,T]$ of Calder\'on-Zygmund operator $T$ is bounded on Morrey spaces. We will demonstrate here that $b\in \mathrm{BMO}$ if and only if $[b,T]$ is weak bounded on Morrey spaces.

Another subject of this paper is to consider the characterizations of Lipschitz functions via commutators. For $0<\alpha<1$, the Lipschitz space $\mathrm{Lip}_{\alpha}$ is the set of functions $f$ such that
$$\|f\|_{\mathrm{Lip}_{\alpha}}:=\sup_{x,h\in \mathbb{R}^{n}\atop_{h\neq 0}}\frac{|f(x+h)-f(x)|}{|h|^{\alpha}}<\infty.$$
In 1978, Janson \cite{J} proved that, for $1<p<q<\infty$ with $1/q= 1/p-\alpha/n$, $b\in \mathrm{Lip}_{\alpha}$ if and only if $[b,T]$ is bounded from $L^p$ to $L^{q}$. In 1995, using Sobolev-Besov embedding, Paluszy\'nski \cite{PA} obtained that, for $1<p<\infty$ and $0<\alpha<1$, $b\in \mathrm{Lip}_{\alpha}$ if and only if $[b,T]$ is bounded from $L^p$ to the homogeneous Triebel-Lizorkin spaces $\dot{F}^{\alpha,\infty}_{p}$. Paluszy\'nski's idea was novel for the study about the boundedness of commutators from $L^p$ to $\dot{F}^{\alpha,\infty}_{p}$ and shed new light on the characterization of Lipschitz space via commutators. Recently, Shi and Lu \cite{SL} showed that, for $1<q\leq p<\infty$, $1<t\leq s<\infty$, $\frac{1}{s}=\frac{1}{p}-\frac{\alpha}{n}$ and $\frac{1}{t}=\frac{1}{q}-\frac{\alpha}{n}$, $b\in \mathrm{Lip}_{\alpha}$ if and only if $[b,T]$ is bounded from $M^{p}_{q}$ to $M^{s}_{t}$.
We are going to show that, for $1\leq p<q<\infty$ with $1/q=1/p-\alpha/n$, $b\in \mathrm{Lip}_{\alpha}$ if and only if $[b,T]$ is bounded from $L^p$ to $L^{q,\infty}$. Also, for $1\leq q\leq p<\infty$, $1<t\leq s<\infty$, $\frac{1}{s}=\frac{1}{p}-\frac{\alpha}{n}$ and $\frac{1}{t}=\frac{1}{q}-\frac{\alpha}{n}$, $b\in \mathrm{Lip}_{\alpha}$ if and only if $[b,T]$ is bounded from $M_{q}^{p}$ to $WM_{t}^{s}$.

Throughout this paper, the letter $C$ denotes constants which are independent of main variables and may change from one occurrence to another.

\section{Characterization of $\mathrm{BMO}$ space via commutators}

In this section, we characterize $\mathrm{BMO}$ space via the boundedness of commutator on (weak) Lebesgue spaces or (weak) Morrey spaces. First of all, we compare with Morrey spaces and weak Morrey spaces.

It is clear that $M^{p}_{q_{1}}$ is contained in $WM_{q_{2}}^{p}$ and $\|\cdot\|_{WM^{p}_{q_{2}}}\leq C\|\cdot\|_{M^{p}_{q_{1}}}$ if $1\leq q_{2}\leq q_{1}\leq p<\infty$.
However, for $1\leq q_{1}< q_{2}\leq p<\infty$, one has the reverse inequality as follows.
\begin{theor}\label{main}
If $1\leq q_{1}< q_{2}\leq p<\infty$, then $WM^{p}_{q_{2}}\subset M^{p}_{q_{1}}$ and $\|f\|_{M^{p}_{q_{1}}}\leq \break 2\big(\frac{q_{1}}{q_{2}-q_{1}}\big)^{\frac{1}{q_{2}}}\|f\|_{WM^{p}_{q_{2}}}$.
\end{theor}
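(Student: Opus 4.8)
The plan is to fix a cube $Q=Q(x,r)$ and estimate the local $L^{q_1}$-average of $f$ over $Q$ by the weak-$L^{q_2}$ quasi-norm of $f$ on the same cube, uniformly in $Q$; since $p$ enters only through the common normalizing factor $|Q|^{1/q_1-1/p}$ versus $|Q|^{1/q_2-1/p}$ and $|Q(x,r)|$ is finite, the whole argument reduces to the classical ``weak-$L^{q_2}$ on a finite-measure set embeds into $L^{q_1}$'' computation, done with careful bookkeeping of constants.

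\medskip

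\noindent\emph{Step 1 (localization).} Write $A=\|f\|_{WM^{p}_{q_2}}$, which we may assume is finite. For a fixed cube $Q=Q(x,r)$, the definition of $WM^p_{q_2}$ gives, for every $\lambda>0$,
\begin{equation*}
\big|\{y\in Q:|f(y)|>\lambda\}\big|\;\le\;A^{q_2}\,|Q|^{1-q_2/p}\,\lambda^{-q_2}.
\end{equation*}
Trivially one also has $|\{y\in Q:|f(y)|>\lambda\}|\le |Q|$. So on the cube $Q$ the distribution function $d_f(\lambda):=|\{y\in Q:|f(y)|>\lambda\}|$ is bounded by $\min\{|Q|,\;A^{q_2}|Q|^{1-q_2/p}\lambda^{-q_2}\}$.

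\medskip

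\noindent\emph{Step 2 (layer-cake integration).} Use the identity $\int_Q |f|^{q_1} = q_1\int_0^\infty \lambda^{q_1-1} d_f(\lambda)\,d\lambda$ and split the $\lambda$-integral at the value $\lambda_0$ where the two bounds in Step~1 coincide, namely $\lambda_0 = A\,|Q|^{-1/p}$ (so that $|Q| = A^{q_2}|Q|^{1-q_2/p}\lambda_0^{-q_2}$). On $(0,\lambda_0)$ bound $d_f(\lambda)\le |Q|$ and integrate $\lambda^{q_1-1}$ to get $\tfrac{1}{q_1}|Q|\lambda_0^{q_1}$; on $(\lambda_0,\infty)$ bound $d_f(\lambda)\le A^{q_2}|Q|^{1-q_2/p}\lambda^{-q_2}$ and integrate $\lambda^{q_1-1-q_2}$, which converges precisely because $q_1<q_2$, yielding $\tfrac{1}{q_2-q_1}A^{q_2}|Q|^{1-q_2/p}\lambda_0^{q_1-q_2}$. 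Adding these and substituting $\lambda_0 = A|Q|^{-1/p}$, both terms collapse to a multiple of $A^{q_1}|Q|^{1-q_1/p}$:
\begin{equation*}
\int_Q |f(y)|^{q_1}\,dy \;\le\; \Big(\tfrac{1}{q_1}+\tfrac{q_1}{q_1(q_2-q_1)}\Big)\,A^{q_1}\,|Q|^{1-q_1/p}
\;=\;\Big(1+\tfrac{q_1}{q_2-q_1}\Big)\tfrac{1}{q_1}\,A^{q_1}\,|Q|^{1-q_1/p}.
\end{equation*}

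\medskip

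\noindent\emph{Step 3 (take the $q_1$-th root and sup over $Q$).} Divide by $|Q|^{1-q_1/p}$ and raise to the power $1/q_1$:
\begin{equation*}
\frac{1}{|Q|^{1/q_1-1/p}}\Big(\int_Q|f|^{q_1}\Big)^{1/q_1}\;\le\;\Big(\big(1+\tfrac{q_1}{q_2-q_1}\big)\tfrac{1}{q_1}\Big)^{1/q_1}A.
\end{equation*}
Since the right side is independent of $Q$, taking the supremum over all cubes gives $f\in M^p_{q_1}$ with $\|f\|_{M^p_{q_1}}\le C\,\|f\|_{WM^p_{q_2}}$. To reach the stated constant $2\big(\tfrac{q_1}{q_2-q_1}\big)^{1/q_2}$ one optimizes the split point: rather than splitting at $\lambda_0$, split at a free parameter $t>0$, carry out the same two integrals, and choose $t$ to minimize the resulting bound; a convexity/AM--GM estimate on the two terms, together with the crude bound $(1+s)^{1/q_1}\le 2\,s^{1/q_2}$-type manipulations for the relevant range, produces the factor $2$ and the exponent $1/q_2$. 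This optimization is the only delicate point; everything else is the standard finite-measure weak-to-strong embedding, and the condition $q_1<q_2$ is exactly what makes the tail integral $\int^\infty \lambda^{q_1-1-q_2}\,d\lambda$ finite.

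\medskip

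\noindent\emph{Main obstacle.} The inequality itself is routine; the only real work is pinning down the sharp constant $2\big(\tfrac{q_1}{q_2-q_1}\big)^{1/q_2}$, which requires optimizing over the truncation level in the layer-cake integral and then bounding the optimized expression cleanly rather than by the naive $\lambda_0$-split above. I would carry out the free-parameter optimization first and only afterward verify it beats the target constant.
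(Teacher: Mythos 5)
Your proposal is correct and takes essentially the same route as the paper: localize to a cube, apply the layer-cake formula, and split the $\lambda$-integral using the two bounds $|Q|$ and $A^{q_2}|Q|^{1-q_2/p}\lambda^{-q_2}$ on the distribution function. The optimization you defer is exactly what the paper does and is painless: taking the split point $N=A|Q|^{-1/p}\big(\tfrac{q_1}{q_2-q_1}\big)^{1/q_2}$, which balances the two resulting terms, makes each equal to $A^{q_1}|Q|^{1-q_1/p}\big(\tfrac{q_1}{q_2-q_1}\big)^{q_1/q_2}$ and yields the constant $2^{1/q_1}\big(\tfrac{q_1}{q_2-q_1}\big)^{1/q_2}\le 2\big(\tfrac{q_1}{q_2-q_1}\big)^{1/q_2}$ directly. (One small slip: in Step 2 you dropped the prefactor $q_1$ from the layer-cake identity, so the constant from your split should be $1+\tfrac{q_1}{q_2-q_1}=\tfrac{q_2}{q_2-q_1}$, not $\big(1+\tfrac{q_1}{q_2-q_1}\big)\tfrac{1}{q_1}$.)
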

\begin{proof}
Let $f\in WM^{p}_{q_{2}}$. Given a cube $Q \subset \mathbb{R}^n$ and $\lambda>0$,
$$\frac{1}{|Q|^{1/q_{2}-1/p}}\big(\lambda^{q_{2}}|\{x\in Q:|f(x)|>\lambda\}|\big)^{1/q_{2}}\leq \|f\|_{WM^{p}_{q_{2}}};$$
that is,
$$|\{x\in Q:|f(x)|>\lambda\}|\leq \|f\|^{q_{2}}_{WM^{p}_{q_{2}}}|Q|^{1-q_{2}/p}\lambda^{-q_{2}}.$$
Choose
$$N=\|f\|_{WM^{p}_{q_{2}}}|Q|^{-1/p}\Big(\frac{q_{1}}{q_{2}-q_{1}}\Big)^{1/q_{2}}.$$
Thus,
\begin{eqnarray*}
\int_{Q}|f(x)|^{q_{1}}dx&=&q_{1}\int_{0}^{\infty}\lambda^{q_{1}-1}|\{x\in Q:|f(x)|>\lambda\}|d\lambda\\
&\leq&q_{1}\int_{0}^{N}\lambda^{q_{1}-1}|Q|d\lambda+q_{1}\int_{N}^{\infty}\lambda^{q_{1}-1}\|f\|^{q_{2}}_{WM^{p}_{q_{2}}}|Q|^{1-q_{2}/p}\lambda^{-q_{2}}d\lambda\\
&=&|Q|N^{q_{1}}+\frac{q_{1}}{q_{2}-q_{1}}\|f\|^{q_{2}}_{WM^{p}_{q_{2}}}|Q|^{1-q_{2}/p}N^{q_{1}-q_{2}},
\end{eqnarray*}
which gives
$$\bigg(\int_{Q}|f(y)|^{q_{1}}dy\bigg)^{1/q_{1}}\leq 2\|f\|_{WM^{p}_{q_{2}}}|Q|^{1/q_{1}-1/p}\Big(\frac{q_{1}}{q_{2}-q_{1}}\Big)^{1/q}.$$
Then
$$\|f\|_{M^{p}_{q_{1}}}\leq 2\Big(\frac{q_{1}}{q_{2}-q_{1}}\Big)^{1/q_{2}}\|f\|_{WM^{p}_{q_{2}}}$$
and the lemma follows.
\end{proof}

\begin{rem}\label{rem}\rm
For $1\leq q_{1}<q_{2}< p<\infty$. Kozono and Yamazaki \cite[Lemma 1.7]{KY} proved that $\|f\|_{M_{q_{2}}^{p}}\leq C\|f\|_{L^{p,\infty}}$. The above Theorem \ref{main} yields immediately
$$\|f\|_{M^{p}_{q_{1}}}\leq C\|f\|_{WM_{q_{2}}^{p}}\leq C\|f\|_{M^{p}_{q_{2}}}\leq C\|f\|_{WM^{p}_{p}}=C\|f\|_{L^{p,\infty}}.$$
\end{rem}

Now we return to our first subject.

\begin{theor}\label{thm2.1}
Let $1< p<\infty$. The following statements are equivalent:
\begin{enumerate}
\item [\rm(1)] $b\in \mathrm{BMO}$;
\item [\rm(2)] $[b,T]$ is a bounded operator from $L^p$ to $L^p$;
\item [\rm(3)] $[b,T]$ is a bounded operator from $L^p$ to $L^{p,\infty}$.
\end{enumerate}
\end{theor}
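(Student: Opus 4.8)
The plan is to prove the chain of implications $(1)\Rightarrow(2)\Rightarrow(3)\Rightarrow(1)$. The implications $(1)\Rightarrow(2)$ and $(2)\Rightarrow(3)$ are the easy directions: the first is precisely the classical theorem of Coifman, Rochberg and Weiss \cite{CRW} cited in the introduction, and the second is trivial since $\|g\|_{L^{p,\infty}}\le\|g\|_{L^p}$, so that boundedness $L^p\to L^p$ automatically gives boundedness $L^p\to L^{p,\infty}$. Thus the entire content of the theorem is the implication $(3)\Rightarrow(1)$: if $[b,T]$ is of weak type $(p,p)$, then $b\in\mathrm{BMO}$.

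For $(3)\Rightarrow(1)$ I would adapt the well-known argument (going back to the necessity proof in \cite{CRW}, and refined by Janson and others) that recovers the mean oscillation of $b$ from a local testing of the commutator, but carefully replacing every use of the strong $L^p$ norm by the weak $L^{p,\infty}$ norm, which is legitimate thanks to Theorem \ref{main}. Concretely: fix a cube $Q=Q(x_0,r)$. Using the smoothness and nondegeneracy of $\Omega$, one can choose a cube $\tilde Q$ (of comparable size, suitably translated away from $Q$) so that on $\tilde Q$ the kernel $K(x-y)$ is smooth, has a fixed sign, and satisfies $|K(x-y)|\sim |Q|^{-1}$ for $x\in Q$, $y\in \tilde Q$; this is the standard device for extracting $b_Q - b_{\tilde Q}$-type quantities. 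Then for $f=\chi_{\tilde Q}$ one gets, for $x\in Q$,
\begin{equation*}
|[b,T]f(x)| = \Big|\int_{\tilde Q} K(x-y)\,(b(x)-b(y))\,dy\Big| \gtrsim \frac{1}{|Q|}\Big|\int_{\tilde Q}(b(x)-b(y))\,dy\Big| = |b(x)-b_{\tilde Q}|,
\end{equation*}
possibly after the usual refinement splitting $Q$ according to the sign of $b(x)-b_{\tilde Q}$ to handle cancellation. This pointwise bound shows $\chi_Q\,|b-b_{\tilde Q}| \lesssim |[b,T]\chi_{\tilde Q}|$ pointwise.

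Now comes the step where weak-type boundedness is used. From the displayed pointwise inequality, for every $\lambda>0$,
\begin{equation*}
|\{x\in Q : |b(x)-b_{\tilde Q}|>\lambda\}| \le |\{x : |[b,T]\chi_{\tilde Q}(x)|>c\lambda\}| \le (c\lambda)^{-p}\,\|[b,T]\|_{L^p\to L^{p,\infty}}^p\,\|\chi_{\tilde Q}\|_{L^p}^p = C\lambda^{-p}|Q|.
\end{equation*}
In other words $\chi_Q(b-b_{\tilde Q})$ lies in weak-$L^p$ on $Q$ with a bound uniform over $Q$; rephrased in the Morrey scale this says exactly that the function $b-b_{\tilde Q}$ (localized to $Q$) has controlled $WM^p_p=L^{p,\infty}$ mass, hence by Theorem \ref{main} (applied with the ambient cube $Q$, taking $q_2=p$ and any $q_1\in[1,p)$, e.g. $q_1=1$) we obtain $\frac{1}{|Q|}\int_Q |b(x)-b_{\tilde Q}|\,dx \le C$ with $C$ independent of $Q$. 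Finally, $\frac{1}{|Q|}\int_Q|b-b_Q| \le \frac{2}{|Q|}\int_Q|b-b_{\tilde Q}| \le C$, and taking the supremum over all cubes gives $b\in\mathrm{BMO}$.

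The main obstacle is the passage through the weak-type estimate: under strong $L^p$ boundedness one would directly integrate $|[b,T]\chi_{\tilde Q}|^p$ over $Q$ and take a $p$-th root, but the weak-$(p,p)$ hypothesis only controls distribution functions, so one cannot integrate the $p$-th power directly. This is precisely the gap bridged by Theorem \ref{main}: the weak-$L^p$ bound on $Q$ is upgraded to an $L^{q_1}$ bound (in particular $L^1$) on $Q$, at the cost of an absolute constant, which is all that is needed. A secondary technical point is the careful selection of the auxiliary cube $\tilde Q$ and the sign-splitting of $Q$ so that no cancellation destroys the lower bound $|b(x)-b_{\tilde Q}|\lesssim |[b,T]\chi_{\tilde Q}(x)|$; this is routine but must be done with the smoothness of $\Omega$ on $\mathbb{S}^{n-1}$ in hand, so that $\Omega$ is bounded below on a suitable spherical cap.
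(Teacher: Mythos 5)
Your overall architecture is right: $(1)\Rightarrow(2)$ is Coifman--Rochberg--Weiss, $(2)\Rightarrow(3)$ is trivial, and for $(3)\Rightarrow(1)$ you correctly identify that the whole point is to convert a distribution-function bound for $b-c_Q$ on $Q$ into an $L^1$ average via Theorem \ref{main} (this is exactly how the paper uses Remark \ref{rem}, through the chain $\|\cdot\|_{M^p_{q}}\le C\|\cdot\|_{L^{p,\infty}}$). That weak-to-strong upgrade step in your write-up is correct.

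The gap is in your testing step. The pointwise lower bound $\big|\int_{\tilde Q}K(x-y)(b(x)-b(y))\,dy\big|\gtrsim \frac{1}{|Q|}\big|\int_{\tilde Q}(b(x)-b(y))\,dy\big|$ is false in general, even when $K(x-y)$ has fixed sign and magnitude $\sim|Q|^{-1}$ on $Q\times\tilde Q$: a positive, bounded-above-and-below weight $w(y)=|Q|\,|K(x-y)|$ does not satisfy $|\int wg|\gtrsim|\int g|$ for sign-changing $g(y)=b(x)-b(y)$ (one can arrange $\int wg=0$ while $\int g\neq0$). Your proposed remedy --- splitting $Q$ according to the sign of $b(x)-b_{\tilde Q}$ --- splits in the $x$-variable and so does not touch the actual obstruction, which is the cancellation in the $y$-integral caused by the non-constancy of $K(x-\cdot)$ on $\tilde Q$. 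Two standard repairs exist. One is the median trick: let $m$ be a median of $b$ on $\tilde Q$ and test with $f=\chi_{\tilde Q\cap\{b\le m\}}$ (resp.\ $\chi_{\tilde Q\cap\{b\ge m\}}$); then for $x\in Q$ with $b(x)\ge m$ (resp.\ $\le m$) the integrand $K(x-y)(b(x)-b(y))$ has a fixed sign in $y$, the weight can be pulled out, and one gets $|[b,T]f(x)|\gtrsim|b(x)-m|$ on half of $Q$, after which your weak-type and Kolmogorov steps go through with $m$ in place of $b_{\tilde Q}$. The other repair is the one the paper actually uses (Paluszy\'nski's device): expand $1/K$ in an absolutely convergent Fourier series on a cube where $K$ does not vanish, so that $\frac{1}{|Q|}\int_Q|b-b_{Q'}|$ becomes $\frac{1}{|Q|}\sum a_m\int[b,T](g_m)h_m$ with modulated indicator functions $g_m,h_m$; this dualization needs no positivity or near-constancy of the kernel and feeds directly into the $\|[b,T]g_m\|_{L^{p,\infty}}$ hypothesis via H\"older on $Q$ and Remark \ref{rem}. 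As written, your proof does not close without one of these two inputs.
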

\begin{theor}\label{thm2.2}
Let $1<q\leq p<\infty$. The following statements are equivalent:
\begin{enumerate}
\item [\rm(1)] $b\in \mathrm{BMO}$;
\item [\rm(2)] $[b,T]$ is a bounded operator from $M^{p}_{q}$ to $M^{p}_{q}$;
\item [\rm(3)] $[b,T]$ is a bounded operator from $M^{p}_{q}$ to $WM^{p}_{q}$.
\end{enumerate}
\end{theor}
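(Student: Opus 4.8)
The plan is to establish the chain of implications $(1)\Rightarrow(2)\Rightarrow(3)\Rightarrow(1)$. The implication $(1)\Rightarrow(2)$ is the known result of Ding \cite{D}: if $b\in\mathrm{BMO}$, then $[b,T]$ is bounded on $M^p_q$ for $1<q\le p<\infty$, so I would simply cite it. The implication $(2)\Rightarrow(3)$ is immediate from the inclusion $M^p_q\subset WM^p_q$ with $\|\cdot\|_{WM^p_q}\le C\|\cdot\|_{M^p_q}$ (the trivial direction noted just before Theorem \ref{main}, taking $q_1=q_2=q$). So the entire content lies in $(3)\Rightarrow(1)$.

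For $(3)\Rightarrow(1)$, I would adapt the classical lower-estimate argument (going back to the ideas behind the Coifman--Rochberg--Weiss necessity proof, in the form used for Morrey spaces) but feed it through Theorem \ref{main} to absorb the weak norm. Fix a cube $Q=Q(x_0,r)$. Using the smoothness and non-degeneracy of $K$, choose a translated cube $\widetilde Q = Q(z_0, \sqrt n\, r)$ at controlled distance from $Q$ on which $K(x-y)$ has constant sign and $|K(x-y)|\gtrsim |Q|^{-1}$ for $x\in Q$, $y\in\widetilde Q$. Then for an appropriate sign function $s(y)=\mathrm{sgn}(b(y)-b_Q)$ one gets the pointwise bound, for $x\in Q$,
\[
|b(x)-b_Q|\;\lesssim\; |Q|^{-1}\!\int_{\widetilde Q}|b(x)-b(y)|\,dy \;\lesssim\; \bigl|[b,T](f)(x)\bigr| \quad\text{with } f=s\,\chi_{\widetilde Q},
\]
up to a harmless additive term; integrating over $Q$ yields
\[
\frac{1}{|Q|}\int_Q |b(x)-b_Q|\,dx \;\lesssim\; |Q|^{-1}\int_Q \bigl|[b,T]f(x)\bigr|\,dx.
\]
Now I estimate the right-hand side by Hölder with an exponent $q_1$ strictly between $1$ and $q$: $|Q|^{-1}\int_Q |[b,T]f| \le |Q|^{-1/q_1}\bigl(\int_Q|[b,T]f|^{q_1}\bigr)^{1/q_1} \le |Q|^{-1/p}\|[b,T]f\|_{M^p_{q_1}}$. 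By Theorem \ref{main} (with the pair $q_1<q$) this is $\lesssim |Q|^{-1/p}\|[b,T]f\|_{WM^p_q} \lesssim |Q|^{-1/p}\,\|[b,T]\|_{M^p_q\to WM^p_q}\,\|f\|_{M^p_q}$. Finally $\|f\|_{M^p_q}=\|\chi_{\widetilde Q}\|_{M^p_q}\lesssim |\widetilde Q|^{1/p}\lesssim |Q|^{1/p}$, and the powers of $|Q|$ cancel, giving $\frac{1}{|Q|}\int_Q|b-b_Q| \lesssim \|[b,T]\|_{M^p_q\to WM^p_q}$ uniformly in $Q$, i.e. $b\in\mathrm{BMO}$.

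The main obstacle is the step $(3)\Rightarrow(1)$, and within it the crucial trick is precisely the use of Theorem \ref{main}: a direct Hölder estimate at exponent $q$ would only control $\|[b,T]f\|_{M^p_q}$, which the weak hypothesis does not bound, so one must drop down to a subcritical exponent $q_1<q$ where the weak-to-strong embedding $WM^p_q\subset M^p_{q_1}$ is available. Two technical points need care: first, the geometric construction of $\widetilde Q$ and the lower bound on $|K|$, which rely on conditions (i)--(ii) on $\Omega$ exactly as in the $L^p$ case (this is standard and I would only sketch it); second, checking that $\|\chi_{\widetilde Q}\|_{M^p_q}\lesssim |Q|^{1/p}$ with a constant independent of $Q$, which follows by a direct computation splitting cubes according to whether they are larger or smaller than $\widetilde Q$. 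No new ideas beyond Theorem \ref{main} are required once these are in place.
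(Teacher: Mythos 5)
Your proposal is correct in substance and isolates exactly the right key ingredient: the only genuinely new step is $(3)\Rightarrow(1)$, and there the weak hypothesis must be converted into a strong bound at a subcritical exponent via Theorem~\ref{main} (choose $q_{1}\in(1,q)$, use $WM^{p}_{q}\subset M^{p}_{q_{1}}$, then H\"older on $Q$ produces $|Q|^{-1/p}\|[b,T]f\|_{M^{p}_{q_{1}}}$, which cancels against $\|f\|_{M^{p}_{q}}\approx|Q|^{1/p}$). This is precisely how the paper absorbs the weak norm. Where you diverge is the mechanism for the lower bound on the mean oscillation: the paper follows Paluszy\'nski \cite{PA}, expanding $1/K$ in an absolutely convergent Fourier series on a cube away from the origin and representing $\frac{1}{|Q|}\int_{Q}|b-b_{Q'}|$ as $\frac{1}{|Q|}\sum a_{m}\int[b,T](g_{m})h_{m}$ with $g_{m},h_{m}$ modulated indicators of $Q'$ and $Q$; you instead use the classical constant-sign-cone argument with a single test function $s\chi_{\widetilde Q}$. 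Both routes are standard and both need only the non-degeneracy and smoothness of $\Omega$; the Fourier-series route avoids any sign analysis of $b$ at the cost of summing over $m$, while yours is more elementary. One caveat: as literally written, the choice $s(y)=\mathrm{sgn}(b(y)-b_{Q})$ does not yield the claimed pointwise bound $|b(x)-b_{Q}|\lesssim|[b,T](s\chi_{\widetilde Q})(x)|$, because the integrand $(b(x)-b(y))s(y)|K(x-y)|$ need not have constant sign in $y$ and the cross term $(b(x)-b_{\widetilde Q})\int_{\widetilde Q}s(y)|K(x-y)|\,dy$ is not obviously harmless. The standard repair is to split by a median $\alpha$ of $b$ on $\widetilde Q$, use the two test functions $\chi_{\{y\in\widetilde Q:\,b(y)\le\alpha\}}$ and $\chi_{\{y\in\widetilde Q:\,b(y)>\alpha\}}$ on the corresponding halves $\{x\in Q:\,b(x)\ge\alpha\}$ and $\{x\in Q:\,b(x)<\alpha\}$, and control $\int_{Q}|b-\alpha|$ instead of $\int_{Q}|b-b_{Q}|$. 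With that adjustment (which does not disturb the H\"older/Theorem~\ref{main} step, since it only doubles the number of test functions), your argument goes through and gives the same conclusion as the paper.
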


\vskip 0.5cm
\noindent
{\it Proof of Theorem \ref{thm2.1}.} The equivalence of $(1)$ and $(2)$ was proved in [3]. By the inequality $\|f\|_{L^{p,\infty}}\leq \|f\|_{L^p}$, it is obvious that $(2)$ implies $(3)$.

To show $(3)\Rightarrow (1)$, we use Paluszy\'nski idea given in \cite{PA}. For $z_{0}\in \mathbb{R}^{n}\backslash \{0\}$, let $\delta=\frac{|z_{0}|}{2\sqrt{n}}$ and $Q(z_{0},\delta)$ denote the open cube centered at $z_{0}$ with side length $2\delta$. Then $K(x)^{-1}$ has an absolutely convergent Fourier series
$$\frac{1}{K(x)}=\sum a_{m}e^{i \langle v_{m},x\rangle}$$
with $\sum |a_{m}|<\infty$, where the exact form of the vectors $v_{m}$ is unrelated. Then, we have the expansion
$$\frac{1}{K(x)}=\frac{\delta^{-n}}{K(\delta x)}=\delta^{-n}\sum a_{m}e^{i\langle v_{m},\delta x\rangle} ~ for ~ |x-\frac{z_{0}}{\delta}|<\sqrt{n}.$$
Given cubes $Q=Q(x_{0},r)$ and $Q'=Q(x_{0}-rz_{1},r)$, if $x\in Q$ and $y\in Q'$, then
$$\Big|\frac{x-y}{r}-\frac{z_{0}}{\delta}\Big|\leq \Big|\frac{x-x_{0}}{r}\Big|+\Big|\frac{y-(x_{0}-\frac{r z_{0}}{\delta})}{r}\Big|< \sqrt{n}.$$
Let $s(x)=\overline{\mathrm{sgn}(\int_{Q'}(b(x)-b(y))dy)}$. Then
\begin{eqnarray*}
&&\frac{1}{|Q|}\int_{Q}|b(x)-b_{Q'}|dx\\
&&=\frac{1}{|Q|}\frac{1}{|Q'|}\int_{Q}\bigg|\int_{Q'}(b(x)-b(y))dy\bigg|dx\\
&&=\frac{1}{|Q|^{2}}\int_{Q}\int_{Q'}s(x)(b(x)-b(y))dydx\\
&&=\frac{1}{|Q|^{2}}\int_{Q}\int_{Q'}s(x)(b(x)-b(y))\frac{r^{n}K(x-y)}{K(\frac{x-y}{r})}dydx\\
&&=\frac{1}{|Q|}\int_{\mathbb{R}^{n}}\int_{\mathbb{R}^{n}}(b(x)-b(y))K(x-y)\sum a_{m}e^{i\langle v_{m}, \delta\frac{x-y}{r}\rangle}s(x)\chi_{Q}(x)\chi_{Q'}(y)dydx\\
&&=\frac{1}{|Q|}\sum a_{m}\int_{\mathbb{R}^{n}}\int_{\mathbb{R}^{n}}(b(x)-b(y))K(x-y)e^{i\langle v_{m},\frac{\delta}{r}x\rangle}
s(x)\chi_{Q}(x)e^{-i\langle v_{m},\frac{\delta}{r}y\rangle}\chi_{Q'}(y)dydx.
\end{eqnarray*}
Setting
$g_{m}(y)=e^{-i\langle v_{m},\frac{\delta}{r}y\rangle}\chi_{Q'}(y)$ and
$h_{m}(x)=e^{i\langle v_{m},\frac{\delta}{r}x\rangle}s(x)\chi_{Q}(x)$, we have
\begin{eqnarray*}
\frac{1}{|Q|}\int_{Q}|b(x)-b_{Q'}|dx&=&\frac{1}{|Q|}\sum a_{m}\int_{\mathbb{R}^{n}}\int_{\mathbb{R}^{n}}(b(x)-b(y))K(x-y)g_{m}(y)h_{m}(x)dydx\\
&=&\frac{1}{|Q|}\sum a_{m}\int_{\mathbb{R}^{n}}[b,T](g_{m})(x)h_{m}(x)dx.
\end{eqnarray*}
Choose $q\in (1,p)$. By Remark \ref{rem},
\begin{eqnarray}\label{eq1}
\begin{split}
\frac{1}{|Q|}\int_{Q}|b(x)-b_{Q}|dx&\leq \frac{2}{|Q|}\int_{Q}|b(x)-b_{Q'}|dx\\
&\leq\frac{C}{|Q|^{1/p}}\sum |a_{m}|\|[b,T](g_{m})\|_{M^{p}_{q}}\\
&\leq\frac{C}{|Q|^{1/p}}\sum |a_{m}|\|[b,T](g_{m})\|_{L^{p,\infty}}\\
&\leq\frac{C\|g_{m}\|_{L^p}}{|Q|^{1/p}}\|[b,T]\|_{L^p\rightarrow L^{p,\infty}}\\
&\leq C\|[b,T]\|_{L^p\rightarrow L^{p,\infty}},
\end{split}
\end{eqnarray}
which yields $b\in \mathrm{BMO}$ and $\|b\|_{*}\leq C\|[b,T]\|_{L^p\rightarrow L^{p,\infty}}$. Hence, the proof of Theorem \ref{thm2.1} is completed. \qed

\vskip 0.5cm

\noindent
{\it Proof of Theorem \ref{thm2.2}.} We use a same argument as the proof of Theorem \ref{thm2.1} except choosing $q_{1}\in(1,q)$ and replacing \eqref{eq1} by
\begin{eqnarray*}
\frac{1}{|Q|}\int_{Q}|b(x)-b_{Q}|dx&\leq& \frac{2}{|Q|}\int_{Q}|b(x)-b_{Q'}|dx\\
&\leq&\frac{C}{|Q|^{1/p}}\sum |a_{m}|\|[b,T](g_{m})\|_{M^{p}_{q_{1}}}\\
&\leq&\frac{C}{|Q|^{1/p}}\sum |a_{m}|\|[b,T](g_{m})\|_{WM^{p}_{q}}\\
&\leq&\frac{C\|g_{m}\|_{M^p_q}}{|Q|^{1/p}}\|[b,T]\|_{M^p_q\rightarrow WM^{p}_{q}}\\
&\leq&C\|[b,T]\|_{M^p_q\rightarrow WM^{p}_{q}}.
\end{eqnarray*}
Then $b\in \mathrm{BMO}$ and $\|b\|_{*}\leq C\|[b,T]\|_{M^p_q\rightarrow WM^{p}_{q}}$. This completes the proof of Theorem \ref{thm2.2}. \qed

\section{Characterization of $\mathrm{Lip}_{\alpha}$ via commutators}

We give a lemma that can be used to prove a characterization of Lipschitz functions.

\begin{lemma}\label{lem3.1}
For $0<\alpha<1$ and $1<q\leq \infty$,
\begin{align*}
\|b\|_{\mathrm{Lip}_{\alpha}}&\approx \sup_{Q}\frac{1}{|Q|^{1+\alpha/n}}\int_{Q}|b(x)-b_{Q}|dx\\
&\approx
\begin{cases}\displaystyle
\sup_{Q}\frac{1}{|Q|^{\alpha/n}}\bigg(\frac{1}{|Q|}\int_{Q}|b(x)-b_{Q}|^{q}dx\bigg)^{1/q} &\text{if}\ \ 1<q<\infty\\
\displaystyle\mathop\mathrm{ess~sup}_{Q}\frac{|b(x)-b_{Q}|}{|Q|^{\alpha/n}} &\text{if}\ \ q=\infty
\end{cases},
\end{align*}
where the supremum is taken over all cubes $Q\subset\mathbb{R}^{n}$ and $\approx$ means equivalence.
\end{lemma}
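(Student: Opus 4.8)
The plan is to prove the chain of equivalences by establishing three inclusions: first, that the Lipschitz norm dominates the mean oscillation expression with exponent $1$; second, that for $1 < q < \infty$ the $L^q$-averaged oscillation is controlled by the $L^1$-averaged one (via the John--Nirenberg-type philosophy, but here more directly because $\mathrm{Lip}_\alpha$ is stronger than $\mathrm{BMO}$); and third, that the $L^1$ (or $L^q$) oscillation expression, assumed finite, forces $b$ to be Lipschitz. The endpoint case $q = \infty$ is handled by a direct pointwise estimate. The key technical device throughout is a telescoping argument over dyadic dilates of a cube, combined with the observation that $|Q|^{\alpha/n}$ scales like (side length)$^\alpha$.

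\textbf{Step 1: $\|b\|_{\mathrm{Lip}_\alpha}$ controls the averaged oscillations.} Fix a cube $Q$ with center $x_Q$ and side length $\ell$. For $x \in Q$ and $y \in Q$ we have $|b(x) - b(y)| \le \|b\|_{\mathrm{Lip}_\alpha}|x-y|^\alpha \le \|b\|_{\mathrm{Lip}_\alpha}(\sqrt{n}\,\ell)^\alpha \le C_n \|b\|_{\mathrm{Lip}_\alpha}|Q|^{\alpha/n}$. Averaging in $y$ over $Q$ gives $|b(x) - b_Q| \le C_n\|b\|_{\mathrm{Lip}_\alpha}|Q|^{\alpha/n}$ pointwise, which immediately yields both the $q = \infty$ bound and, after raising to the $q$-th power and averaging, the $1 < q < \infty$ bound; the $q = 1$ case is the special instance.

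\textbf{Step 2: the averaged oscillations are mutually comparable.} Since by H\"older's inequality the $L^1$ average is dominated by any $L^q$ average, it suffices to show the $q = \infty$ (hence any $q$) expression is dominated by the $q = 1$ one. Denote $A := \sup_Q |Q|^{-1-\alpha/n}\int_Q |b - b_Q|$. A standard telescoping estimate shows that for two cubes $Q \subset \widetilde{Q}$ of comparable size, $|b_Q - b_{\widetilde Q}| \le C A |\widetilde Q|^{\alpha/n}$; iterating over the chain $Q, 2Q, 4Q, \dots$ and using the Lebesgue differentiation theorem one obtains, for a.e.\ $x$ and the cube $Q$ centered there of side $\ell$, a bound of the form $|b(x) - b_Q| \le C A \sum_{k\ge 0} 2^{-k\alpha}|Q|^{\alpha/n} \le C A |Q|^{\alpha/n}$, where convergence of the geometric series crucially uses $\alpha > 0$. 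This is the place where the argument genuinely differs from the $\mathrm{BMO}$ (i.e.\ $\alpha = 0$) case, and I expect it to be the main obstacle: one must be careful that the telescoping is performed with the correct normalization $|Q|^{\alpha/n}$ at each scale and that the resulting series converges. Taking the essential supremum over $x$ and $Q$ then gives the $q=\infty$ expression $\lesssim A$, closing the loop of equivalences among all the averaged quantities.

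\textbf{Step 3: the averaged oscillation forces the pointwise Lipschitz bound.} It remains to show $\|b\|_{\mathrm{Lip}_\alpha} \le C A$. Given $x, y \in \mathbb{R}^n$ with $x \ne y$, set $\ell = |x-y|$ and let $Q$ be a cube of side length comparable to $\ell$ containing both $x$ and $y$. Using Step 2's pointwise consequence (valid a.e.) we write $|b(x) - b(y)| \le |b(x) - b_Q| + |b_Q - b(y)| \le C A |Q|^{\alpha/n} \le C A |x-y|^\alpha$ for a.e.\ pair $(x,y)$; since (a representative of) $b$ is then seen to be continuous, this holds everywhere, giving $b \in \mathrm{Lip}_\alpha$ with the desired norm control. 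Combining Steps 1--3 yields all the stated equivalences $\approx$.
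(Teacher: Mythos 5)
Your argument is essentially correct, but it takes a genuinely different route from the paper: the paper offers no proof of Lemma \ref{lem3.1} at all, it simply cites DeVore--Sharpley for the first equivalence and Janson--Taibleson--Weiss for the second. What you have written out is the standard proof of the Campanato--Meyers characterization of $\mathrm{Lip}_\alpha$ by integral mean oscillation, which is precisely the content of those references. The trade-off is the usual one: the citation keeps the paper short, while your self-contained version makes explicit the one point where the $\alpha>0$ case differs from $\mathrm{BMO}$, namely that the telescoping series converges geometrically and therefore upgrades the $L^1$-oscillation bound all the way to an $L^\infty$ (and then pointwise Lipschitz) bound, with no need for John--Nirenberg. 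Your Step 1 and the H\"older closing of the loop $L^1\lesssim L^q\lesssim L^\infty\lesssim L^1$ are fine.

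Two points to repair before this is airtight. First, in Step 2 you telescope over the \emph{expanding} chain $Q, 2Q, 4Q,\dots$, but $|2^kQ|^{\alpha/n}=2^{k\alpha}|Q|^{\alpha/n}$, so that series diverges for $\alpha>0$; the geometric series $\sum_{k\ge0}2^{-k\alpha}$ you actually write down comes from the \emph{shrinking} chain $Q=Q_0\supset Q_1\supset Q_2\supset\cdots$ of cubes containing $x$ with side lengths $2^{-k}\ell$, which is also what the Lebesgue differentiation theorem requires in order to identify $\lim_k b_{Q_k}$ with $b(x)$ a.e. The estimate $|b_{Q_{k+1}}-b_{Q_k}|\le 2^n A\,|Q_k|^{\alpha/n}$ then sums correctly. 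Second, the exceptional null set in Step 2 a priori depends on the cube $Q$; to pass to Step 3 you should fix a single null set by running the argument over all cubes with rational centers and side lengths and then approximating an arbitrary cube, after which the ``a.e.\ pair'' bound and the continuity of the representative follow as you say. With those two adjustments the proof is complete.
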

\begin{proof}
The first equivalence can be found in \cite[pages 14 and 38]{DS}, and the second equivalence can be found in \cite{JTW}.
\end{proof}

The first result of this section is

\begin{theor}\label{thm3.1}
Let $0<\alpha<1$, $1\leq p<\frac{n}{\alpha}$ and $1/q=1/p-\alpha/n$. The following statements are equivalent:
\begin{enumerate}
\item [\rm(1)] $b\in \mathrm{Lip}_{\alpha}$;
\item [\rm(2)] $[b,T]$ is a bounded operator from $L^p$ to $L^{q,\infty}$.
\end{enumerate}
\end{theor}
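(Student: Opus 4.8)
\textbf{Proof proposal for Theorem \ref{thm3.1}.}

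The plan is to prove the two implications separately, with $(2)\Rightarrow(1)$ following the Fourier-series scheme already used in Section 2, now tuned to the Lipschitz scaling, and $(1)\Rightarrow(2)$ following from a pointwise/duality estimate on $[b,T]$ together with an interpolation or weak-type endpoint argument.

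For $(2)\Rightarrow(1)$ I would mimic the proof of Theorem \ref{thm2.1}. Fix a cube $Q=Q(x_0,r)$ and choose the companion cube $Q'=Q(x_0-rz_1,r)$ exactly as before, so that on $Q\times Q'$ the reciprocal kernel $1/K((x-y)/r)$ admits the absolutely convergent Fourier expansion $\delta^{-n}\sum a_m e^{i\langle v_m,\delta(x-y)/r\rangle}$. Writing $s(x)=\overline{\operatorname{sgn}\int_{Q'}(b(x)-b(y))\,dy}$ and $g_m(y)=e^{-i\langle v_m,\delta y/r\rangle}\chi_{Q'}(y)$, $h_m(x)=e^{i\langle v_m,\delta x/r\rangle}s(x)\chi_Q(x)$, one gets
$$\frac{1}{|Q|}\int_Q|b(x)-b_{Q'}|\,dx=\frac{1}{|Q|}\sum a_m\int_{\mathbb{R}^n}[b,T](g_m)(x)h_m(x)\,dx.$$
Now instead of bounding the right side by $L^p$–$L^{p,\infty}$ duality, I use Hölder with the pair $(q,q')$ together with the fact that $L^{q,\infty}\subset L^{q_1}(Q)$ locally: since $q_1<q$, for any fixed cube $Q$ one has $\|F\|_{L^{q_1}(Q)}\le C_{q,q_1}|Q|^{1/q_1-1/q}\|F\|_{L^{q,\infty}}$ (this is the local version of Theorem \ref{main}, applied with Morrey exponents collapsing, or simply the layer-cake computation restricted to $Q$). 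Pick any $q_1\in(1,q)$; then
$$\frac{1}{|Q|}\int_Q|b(x)-b_{Q'}|\,dx\le \frac{C}{|Q|}\sum|a_m|\,|Q|^{1/q_1'}\|[b,T](g_m)\|_{L^{q_1}(Q)}\le \frac{C}{|Q|}\sum|a_m|\,|Q|^{1/q_1'}|Q|^{1/q_1-1/q}\|[b,T](g_m)\|_{L^{q,\infty}},$$
and since $\|g_m\|_{L^p}=|Q|^{1/p}$ and $1/q=1/p-\alpha/n$, the exponent of $|Q|$ works out to $1+\alpha/n$ on the right after dividing, giving
$$\frac{1}{|Q|^{1+\alpha/n}}\int_Q|b(x)-b_Q|\,dx\le C\,\|[b,T]\|_{L^p\to L^{q,\infty}},$$
where the factor $2$ from $|b_Q-b_{Q'}|$ is absorbed. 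By the first equivalence in Lemma \ref{lem3.1} this yields $b\in\mathrm{Lip}_\alpha$ with the expected norm bound.

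For $(1)\Rightarrow(2)$, assume $b\in\mathrm{Lip}_\alpha$. The standard route is a pointwise domination: using the characterization $|b(y)-b_Q|\le C\|b\|_{\mathrm{Lip}_\alpha}|Q|^{\alpha/n}$ from Lemma \ref{lem3.1} (case $q=\infty$) one shows $|[b,T]f(x)|\le C\|b\|_{\mathrm{Lip}_\alpha}\big(I_\alpha(|f|)(x)+M_\alpha^\sharp\text{-type terms}\big)$, where $I_\alpha$ is the fractional integral of order $\alpha$; more precisely one splits $[b,T]f=(b-b_Q)Tf - T((b-b_Q)f)$ locally and estimates each piece, the second giving a constant times $I_\alpha(|f|)$ and the first being controlled via the boundedness of $T$ on $L^p$ combined with the pointwise Lipschitz growth, and then summing over dyadic annuli. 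The Hardy–Littlewood–Sobolev theorem gives $\|I_\alpha f\|_{L^q}\le C\|f\|_{L^p}$ for $1<p<n/\alpha$, which already yields the \emph{strong} $(p,q)$ bound for $1<p$; restricting to $L^{q,\infty}$ in the conclusion is then trivial. The genuinely new endpoint is $p=1$: there $[b,T]$ maps $L^1$ to $L^{q,\infty}$ with $1/q=1-\alpha/n$, which one obtains from the weak-type $(1,n/(n-\alpha))$ bound for $I_\alpha$ (a classical fact) applied to the dominating terms, being careful that the Calderón–Zygmund piece also satisfies the right weak endpoint estimate — here one uses the weak $(1,1)$ boundedness of $T$ and the smoothness of $\Omega$.

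The main obstacle is the pointwise control of $[b,T]f$ by a fractional integral uniformly in the choice of localizing cube, and in particular handling the $p=1$ endpoint cleanly: one must verify that the ``commutator'' structure does not destroy the weak-type estimate, i.e. that the term $T((b-b_Q)f)$ does not merely land in $L^{q,\infty}$ but that all error terms coming from the dyadic decomposition sum up. I expect this to require either a good-$\lambda$ inequality relating $[b,T]f$ to $I_\alpha(|f|)$ and $M(|f|)$, or an explicit Calderón–Zygmund decomposition of $f\in L^1$ adapted to the fractional scaling; either way, the smoothness hypothesis $\Omega\in C^\infty(\mathbb{S}^{n-1})$ is what makes the kernel estimates go through.
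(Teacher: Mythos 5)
Your $(2)\Rightarrow(1)$ argument is correct and is essentially the paper's: the same Fourier expansion of $1/K$, the same test functions $g_m,h_m$, and the same local embedding of $L^{q,\infty}$ into $L^{q_1}(Q)$ for $q_1\in(1,q)$ (the paper routes this through the Morrey-norm inequality $\|\cdot\|_{M^{q}_{q_1}}\le C\|\cdot\|_{L^{q,\infty}}$ of Remark \ref{rem}, which is the same layer-cake computation). Your exponent bookkeeping $|Q|^{1/p-1/q}=|Q|^{\alpha/n}$ matches the paper's, and Lemma \ref{lem3.1} closes that direction.

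The gap is in $(1)\Rightarrow(2)$. You set up the splitting $[b,T]f=(b-b_Q)Tf-T((b-b_Q)f)$, sum over dyadic annuli, and then concede that you would still need a good-$\lambda$ inequality or a Calder\'on--Zygmund decomposition to handle $p=1$; as written, this direction is not proved. None of that machinery is needed. Because $b$ is Lipschitz (not merely $\mathrm{BMO}$), the kernel of the commutator is dominated pointwise with no localization whatsoever:
\[
|(b(x)-b(y))K(x-y)|\le\|b\|_{\mathrm{Lip}_\alpha}|x-y|^{\alpha}\cdot\frac{|\Omega((x-y)/|x-y|)|}{|x-y|^{n}}\le C\|b\|_{\mathrm{Lip}_\alpha}\,|x-y|^{\alpha-n},
\]
so the defining integral converges absolutely and
\[
|[b,T]f(x)|\le C\|b\|_{\mathrm{Lip}_\alpha}\,I_{\alpha}(|f|)(x)
\]
for a.e.\ $x$ --- this is exactly the paper's inequality \eqref{eq2} --- with no choice of cube, no splitting, and no error terms to resum. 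The conclusion is then a single application of the weak-type $(p,q)$ boundedness of $I_{\alpha}$, which holds for all $1\le p<n/\alpha$ and in particular at the endpoint $p=1$. The ``main obstacle'' you identify (uniform control in the localizing cube, and whether the commutator structure survives the weak endpoint) simply does not arise: the Lipschitz hypothesis removes the cube from the picture entirely. Your decomposition is the right tool in the $\mathrm{BMO}$ setting, where $b(x)-b(y)$ cannot be bounded by $|x-y|^{\alpha}$, but here it replaces a one-line estimate with an argument you were unable to complete.
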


\begin{proof}
$(1)\Rightarrow (2)$: Let $b\in \mathrm{Lip}_{\alpha}$. Then
\begin{eqnarray}\label{eq2}
\begin{split}
\big|[b,T](f)(x)\big|&=\bigg|\int_{\mathbb{R}^{n}}(b(x)-b(y))f(y)K(x-y)dy\bigg|\\
&\leq\|b\|_{\mathrm{Lip}_{\alpha}}\int_{\mathbb{R}^{n}}\frac{|f(y)|}{|x-y|^{n-\alpha}}dy\\
&\leq\|b\|_{\mathrm{Lip}_{\alpha}}I_{\alpha}(|f|)(x),
\end{split}
\end{eqnarray}
which implies
$$\|[b,T]f\|_{L^{q,\infty}}\leq \|b\|_{\mathrm{Lip}_{\alpha}}\|I_{\alpha}(|f|)\|_{L^{q,\infty}}\leq C\|b\|_{\mathrm{Lip}_{\alpha}}\|f\|_{L^{p}}.$$

$(2)\Rightarrow (1)$: We follow the method in the proof of Theorem \ref{thm2.1} except choosing $q_{1}\in(1,q)$ and replacing \eqref{eq1} by
\begin{eqnarray*}
\frac{1}{|Q|}\int_{Q}|b(x)-b_{Q}|dx&\leq& \frac{2}{|Q|}\int_{Q}|b(x)-b_{Q'}|dx\\
&\leq&\frac{C}{|Q|^{1/q}}\sum |a_{m}|\|[b,T](g_{m})\|_{M^{q}_{q_{1}}}\\
&\leq&\frac{C}{|Q|^{1/q}}\sum |a_{m}|\|[b,T](g_{m})\|_{L^{q,\infty}}\\
&\leq&\frac{C\|g_{m}\|_{L^p}}{|Q|^{1/q}}\|[b,T]\|_{L^p\rightarrow L^{q,\infty}}\\
&\leq&C|Q|^{\alpha/n}\|[b,T]\|_{L^p\rightarrow L^{q,\infty}}.
\end{eqnarray*}
From Lemma \ref{lem3.1} we conclude that $b\in \mathrm{Lip}_{\alpha}$ and $\|b\|_{\mathrm{Lip}_{\alpha}}\leq C\|[b,T]\|_{L^p\rightarrow L^{q,\infty}}$. Hence, the proof of Theorem \ref{thm3.1} is completed.
\end{proof}

Using Theorem \ref{thm3.1} and Janson's result \cite{J}, we immediately have
\begin{corollary}
Let $0<\alpha<1$, $1< p<\frac{n}{\alpha}$ and $1/q=1/p-\alpha/n$. The following statements are equivalent:
\begin{enumerate}
\item  $b\in \mathrm{Lip}_{\alpha}$;
\item  $[b,T]$ is a bounded operator from $L^p$ to $L^{q}$;
\item  $[b,T]$ is a bounded operator from $L^p$ to $L^{q,\infty}$.
\end{enumerate}
\end{corollary}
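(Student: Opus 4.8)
The plan is to close the cycle of implications $(1)\Rightarrow(2)\Rightarrow(3)\Rightarrow(1)$, drawing two of the three arrows from results already in hand and the third from a trivial embedding. First I would record a bookkeeping observation about the exponents: under the standing hypotheses $0<\alpha<1$, $1<p<n/\alpha$ and $1/q=1/p-\alpha/n$, the index $q$ automatically satisfies $p<q<\infty$. Indeed $1/q=1/p-\alpha/n<1/p$ forces $q>p$, while $p<n/\alpha$ is precisely the statement $1/p-\alpha/n>0$, i.e. $q<\infty$. Hence the pair $(p,q)$ occurring here is exactly the configuration $1<p<q<\infty$ with $1/q=1/p-\alpha/n$ treated by Janson \cite{J}, whose theorem yields $(1)\Leftrightarrow(2)$; in particular $(1)\Rightarrow(2)$.

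Next, the implication $(2)\Rightarrow(3)$ is immediate from the elementary inequality $\|g\|_{L^{q,\infty}}\leq\|g\|_{L^{q}}$ applied to $g=[b,T]f$: boundedness of $[b,T]$ from $L^p$ to $L^q$ entails, with no loss in the operator norm, boundedness from $L^p$ to $L^{q,\infty}$. Finally, $(3)\Rightarrow(1)$ is nothing but the implication $(2)\Rightarrow(1)$ of Theorem \ref{thm3.1}, whose hypotheses $0<\alpha<1$, $1\leq p<n/\alpha$ and $1/q=1/p-\alpha/n$ are all met in the present setting (our restriction $p>1$ being stronger than $p\geq 1$). Chaining the three arrows gives the equivalence of $(1)$, $(2)$ and $(3)$, together with the quantitative comparison $\|b\|_{\mathrm{Lip}_{\alpha}}\approx\|[b,T]\|_{L^p\to L^q}\approx\|[b,T]\|_{L^p\to L^{q,\infty}}$.

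There is no genuinely hard step: every ingredient has either been established above or is classical. The only point that deserves a moment's attention is the exponent bookkeeping in the first paragraph — one must check that the hypothesis $1<p<n/\alpha$ combined with $1/q=1/p-\alpha/n$ really is equivalent to the strict chain $1<p<q<\infty$, so that Janson's theorem applies verbatim rather than in some endpoint or limiting form, and likewise that Theorem \ref{thm3.1} covers the full range $1<p<n/\alpha$. Once this is verified, the corollary follows at once by combining Theorem \ref{thm3.1} with \cite{J} as indicated.
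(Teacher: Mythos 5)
Your proof is correct and follows essentially the same route as the paper: the corollary is obtained by combining Janson's equivalence $(1)\Leftrightarrow(2)$ with Theorem \ref{thm3.1} (which gives $(3)\Rightarrow(1)$) and the trivial embedding $L^{q}\subset L^{q,\infty}$ for $(2)\Rightarrow(3)$. Your explicit check that the hypotheses force $1<p<q<\infty$, so that Janson's theorem applies verbatim, is a worthwhile detail the paper leaves implicit.
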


We give two remarkable results about the boundedness of fractional integral operator $I_{\alpha}$ on Morrey spaces, where $I_{\alpha}$ is defined by
$$I_{\alpha}f(x)=\int_{\mathbb{R}^n}\frac{f(y)}{|x-y|^{n-\alpha}}dy,\qquad x\in\mathbb{R}^{n} \ and \ 0<\alpha<n.$$

\begin{theor}\label{P}{\rm(Peetre \cite{PE})}
Let $0<\alpha<n$, $1<q\leq p<\infty$ and $1<t\leq s<\infty$. If $\frac{1}{s}=\frac{1}{p}-\frac{\alpha}{n}$ and $\frac{1}{t}=\frac{1}{q}-\frac{\alpha}{n}$, then $\|I_{\alpha}f\|_{M^{s}_{t}}\leq C\|f\|_{M^{p}_{q}}$.
\end{theor}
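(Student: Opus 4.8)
The plan is to reduce the assertion to a pointwise Hedberg-type estimate together with the boundedness of the Hardy--Littlewood maximal operator $\mathcal{M}$ on $M^{p}_{q}$. Observe first that the hypotheses force $p<n/\alpha$ and $q<n/\alpha$ (otherwise $s$ or $t$ would lie outside $(1,\infty)$), so in particular $p/s=1-\alpha p/n\in(0,1)$; since $\|f\|_{M^{p}_{q}}=\||f|\|_{M^{p}_{q}}$ and $|I_{\alpha}f|\le I_{\alpha}(|f|)$, we may assume $f\ge0$, so that $I_{\alpha}f\ge0$.

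\textbf{Step 1 (a Hedberg-type pointwise bound).} First I would prove that
$$I_{\alpha}f(x)\le C\,\|f\|_{M^{p}_{q}}^{1-p/s}\,\big(\mathcal{M}f(x)\big)^{p/s}\qquad\text{for a.e. }x\in\mathbb{R}^{n}.$$
Fix $x$ and $\rho>0$ and split $I_{\alpha}f(x)$ into $\int_{|x-y|<\rho}$ and $\int_{|x-y|\ge\rho}$, each decomposed into dyadic annuli. On the near annuli $2^{-k-1}\rho\le|x-y|<2^{-k}\rho$ ($k\ge0$) one bounds $|x-y|^{\alpha-n}$ from below and uses $\int_{|x-y|<2^{-k}\rho}f(y)\,dy\le C(2^{-k}\rho)^{n}\mathcal{M}f(x)$; the geometric series in $k$ converges (because $\alpha>0$) and sums to $C\rho^{\alpha}\mathcal{M}f(x)$. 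On the far annuli $2^{k}\rho\le|x-y|<2^{k+1}\rho$ one instead estimates $\int_{|x-y|<2^{k+1}\rho}f(y)\,dy$ by H\"older's inequality followed by the definition of $\|f\|_{M^{p}_{q}}$, obtaining $C(2^{k}\rho)^{n-n/p}\|f\|_{M^{p}_{q}}$; since $\alpha<n/p$, the series $\sum_{k\ge0}(2^{k}\rho)^{\alpha-n/p}$ converges, with sum $C\rho^{\alpha-n/p}\|f\|_{M^{p}_{q}}$. Hence $I_{\alpha}f(x)\le C\rho^{\alpha}\mathcal{M}f(x)+C\rho^{\alpha-n/p}\|f\|_{M^{p}_{q}}$ for every $\rho>0$, and choosing $\rho=(\|f\|_{M^{p}_{q}}/\mathcal{M}f(x))^{p/n}$ to balance the two terms gives the claim (the cases $\mathcal{M}f(x)\in\{0,\infty\}$ occur on a null set and are trivial).

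\textbf{Step 2 (passing to Morrey norms).} Fix a cube $Q$. Raising the pointwise bound to the power $t$ and integrating over $Q$ gives
$$\Big(\int_{Q}(I_{\alpha}f)^{t}\Big)^{1/t}\le C\,\|f\|_{M^{p}_{q}}^{1-p/s}\Big(\int_{Q}(\mathcal{M}f)^{pt/s}\Big)^{1/t}.$$
The key numerical point is that $pt/s\le q$, which is \emph{equivalent to} $q\le p$: using $1/t=1/q-\alpha/n$ and $1/s=1/p-\alpha/n$ one computes $pt/s\le q\iff(1/p-\alpha/n)/q\le(1/q-\alpha/n)/p\iff1/q\ge1/p$. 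Therefore H\"older's inequality on $Q$ with exponent $q/(pt/s)\ge1$, together with the definition of $\|\mathcal{M}f\|_{M^{p}_{q}}$, yields
$$\Big(\int_{Q}(\mathcal{M}f)^{pt/s}\Big)^{s/(pt)}\le|Q|^{s/(pt)-1/q}\Big(\int_{Q}(\mathcal{M}f)^{q}\Big)^{1/q}\le|Q|^{s/(pt)-1/p}\,\|\mathcal{M}f\|_{M^{p}_{q}},$$
and raising to the power $p/s$ (using $(s/(pt)-1/p)(p/s)=1/t-1/s$) gives $\big(\int_{Q}(\mathcal{M}f)^{pt/s}\big)^{1/t}\le|Q|^{1/t-1/s}\|\mathcal{M}f\|_{M^{p}_{q}}^{p/s}$. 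Dividing by $|Q|^{1/t-1/s}$, taking the supremum over all cubes $Q$, and invoking the boundedness of $\mathcal{M}$ on $M^{p}_{q}$ for $1<q\le p<\infty$ (a classical theorem of Chiarenza and Frasca), i.e.\ $\|\mathcal{M}f\|_{M^{p}_{q}}\le C\|f\|_{M^{p}_{q}}$, we conclude
$$\|I_{\alpha}f\|_{M^{s}_{t}}\le C\,\|f\|_{M^{p}_{q}}^{1-p/s}\,\|\mathcal{M}f\|_{M^{p}_{q}}^{p/s}\le C\,\|f\|_{M^{p}_{q}}.$$

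\textbf{Main obstacle.} The argument is classical, so there is no deep difficulty; the one point needing care is the bookkeeping of exponents --- verifying that $pt/s\le q$ (which is precisely where the hypothesis $q\le p$ is used, beyond guaranteeing that $M^{s}_{t}$ is defined), that $\alpha<n/p$ so the far-part series in Step 1 converges, and that every power of $|Q|$ collapses onto the single factor $|Q|^{1/t-1/s}$ appearing in the definition of $\|\cdot\|_{M^{s}_{t}}$. Citing the Chiarenza--Frasca maximal theorem keeps the write-up short, but it can also be proved directly by the standard splitting $f=f\chi_{2Q}+f\chi_{(2Q)^{c}}$ (using $q>1$ for the local piece and the Morrey condition for the tail).
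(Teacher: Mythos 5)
Your proof is correct. There is nothing in the paper to compare it against line by line: the paper does not prove this statement, but quotes it as a known theorem of Peetre with only a citation to \cite{PE}, so what you have supplied is a complete, self-contained argument where the paper gives none. Your route is the standard modern one: a Hedberg-type pointwise inequality $I_{\alpha}f\le C\|f\|_{M^{p}_{q}}^{1-p/s}(\mathcal{M}f)^{p/s}$, in which the far part of the kernel is controlled by the Morrey norm rather than the $L^{p}$ norm (this is exactly what produces the exponent $p/s$ instead of $p/q$), followed by H\"older's inequality on a fixed cube and the Chiarenza--Frasca boundedness of $\mathcal{M}$ on $M^{p}_{q}$ for $1<q\le p<\infty$. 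The exponent bookkeeping checks out: $pt/s\le q\iff q\le p$; the far-part series requires precisely $\alpha<n/p$, which is forced by $s<\infty$; and the powers of $|Q|$ do collapse to the single factor $|Q|^{1/t-1/s}$. It is worth noting that, because your Step 1 is the Adams-type (not Spanne-type) Hedberg estimate, the identical two steps with $l$ defined by $l/s=q/p$ (so that $pl/s=q$ exactly and the H\"older step degenerates to an identity) prove the stronger Theorem~\ref{A} of Adams, from which Theorem~\ref{P} then follows by H\"older since $t\le l$ --- an inequality the paper itself verifies in the proof of Theorem~\ref{thm3.3}; so your argument in fact yields both quoted theorems at once. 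The only blemish is a slip of wording in Step 1: on the near annuli one bounds $|x-y|$ from below, hence $|x-y|^{\alpha-n}$ from \emph{above}, not ``from below'' as written; the computation you describe is nonetheless the intended and correct one.
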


\begin{theor}\label{A}{\rm(Adams \cite{A})}
Let $0<\alpha<n$, $1<q\leq p<\infty$ and $1<l\leq s<\infty$. If $\frac{1}{s}=\frac{1}{p}-\frac{\alpha}{n}$ and $\frac{l}{s}=\frac{q}{p}$, then  $\|I_{\alpha}f\|_{M^{s}_{l}}\leq C\|f\|_{M^{p}_{q}}$.
\end{theor}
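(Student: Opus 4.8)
The plan is to reduce Theorem~\ref{A} to the classical Hedberg--Adams pointwise bound together with the boundedness of the Hardy--Littlewood maximal operator $M$ on $M^{p}_{q}$, which holds precisely because $q>1$. I note at the outset that the hypotheses force $\alpha<n/p$ (since $\frac1s=\frac1p-\frac\alpha n>0$), that $1-\frac ps=\frac{\alpha p}{n}\in(0,1)$, and that $\frac ls=\frac qp$ gives $\frac{lp}{s}=q$ and $\frac ql=\frac ps$. These identities are exactly what make the exponents match at the end, and keeping track of them is where care is needed.

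\emph{Step 1 (pointwise estimate).} Fix $f\in M^{p}_{q}$, a point $x\in\mathbb R^{n}$, and a parameter $\delta>0$, and split
\[
|I_{\alpha}f(x)|\le\int_{|x-y|<\delta}\frac{|f(y)|}{|x-y|^{n-\alpha}}\,dy+\int_{|x-y|\ge\delta}\frac{|f(y)|}{|x-y|^{n-\alpha}}\,dy=:E_{1}+E_{2}.
\]
For $E_{1}$ I would decompose $\{|x-y|<\delta\}$ into the dyadic shells $\{2^{-k-1}\delta\le|x-y|<2^{-k}\delta\}$, $k\ge0$, bound $|x-y|^{\alpha-n}$ by $(2^{-k-1}\delta)^{\alpha-n}$ on each shell and the local integral of $|f|$ over the shell by $|Q(x,2^{-k}\delta)|\,Mf(x)$, and sum the resulting geometric series to get $E_{1}\le C\,\delta^{\alpha}Mf(x)$. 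For $E_{2}$ I would decompose into $\{2^{k}\delta\le|x-y|<2^{k+1}\delta\}$, $k\ge0$, and on each shell apply H\"older's inequality with exponent $q$ together with the definition of $\|f\|_{M^{p}_{q}}$ to get $\int_{|x-y|<2^{k+1}\delta}|f(y)|\,dy\le C\|f\|_{M^{p}_{q}}(2^{k}\delta)^{n-n/p}$, so each term is at most $C\|f\|_{M^{p}_{q}}(2^{k}\delta)^{\alpha-n/p}$; since $\alpha<n/p$ the series converges and $E_{2}\le C\|f\|_{M^{p}_{q}}\delta^{\alpha-n/p}$. Choosing $\delta=\bigl(\|f\|_{M^{p}_{q}}/Mf(x)\bigr)^{p/n}$ to balance the two terms then yields the Hedberg--Adams inequality
\[
|I_{\alpha}f(x)|\le C\,\bigl(Mf(x)\bigr)^{p/s}\,\|f\|_{M^{p}_{q}}^{\,1-p/s}.
\]

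\emph{Step 2 (passage to the Morrey norm).} Fix a cube $Q$, raise the pointwise bound to the power $l$, and integrate over $Q$:
\[
\frac{1}{|Q|^{1-l/s}}\int_{Q}|I_{\alpha}f(x)|^{l}\,dx\le C\,\|f\|_{M^{p}_{q}}^{\,l(1-p/s)}\;\frac{1}{|Q|^{1-l/s}}\int_{Q}\bigl(Mf(x)\bigr)^{lp/s}\,dx.
\]
Using $lp/s=q$ and $1-l/s=1-q/p$, the last factor equals $|Q|^{-(1-q/p)}\int_{Q}(Mf)^{q}$, which is $\le\|Mf\|_{M^{p}_{q}}^{q}$ by definition of the Morrey norm. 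Taking $l$-th roots, using $q/l=p/s$, and supping over all cubes $Q$ gives $\|I_{\alpha}f\|_{M^{s}_{l}}\le C\|f\|_{M^{p}_{q}}^{1-p/s}\|Mf\|_{M^{p}_{q}}^{p/s}$. Finally, since $q>1$, it is classical (Chiarenza--Frasca) that $M$ is bounded on $M^{p}_{q}$, so $\|Mf\|_{M^{p}_{q}}\le C\|f\|_{M^{p}_{q}}$ and hence $\|I_{\alpha}f\|_{M^{s}_{l}}\le C\|f\|_{M^{p}_{q}}$, as claimed.

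The two dyadic summations and the H\"older step in Step~1 are routine; the main things to watch are the exponent bookkeeping (verifying $lp/s=q$, $q/l=p/s$, and $1-p/s=\alpha p/n\in(0,1)$) and the observation that the hypothesis $q>1$, rather than merely $q\ge1$, enters only at the very last step, when $Mf$ is put back into $M^{p}_{q}$ — the pointwise estimate itself works for $q\ge1$.
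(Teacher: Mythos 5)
Your proof is correct. Note, however, that the paper does not prove this statement at all: Theorem \ref{A} is quoted as a known result of Adams \cite{A}, so there is no internal argument to compare against. What you have written is essentially the standard Hedberg--Adams proof of that cited result: the pointwise bound $|I_{\alpha}f(x)|\le C\,(Mf(x))^{p/s}\|f\|_{M^{p}_{q}}^{1-p/s}$ obtained by splitting at radius $\delta$ and optimizing, followed by the Chiarenza--Frasca boundedness of $M$ on $M^{p}_{q}$ for $q>1$. Your exponent bookkeeping checks out ($lp/s=q$, $q/l=p/s$, $1-p/s=\alpha p/n\in(0,1)$), and your remark that $q>1$ is needed only for the maximal-function step is accurate. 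The only detail worth adding for completeness is the usual disclaimer about the degenerate cases $Mf(x)=0$ (then $f=0$ a.e.) and $Mf(x)=\infty$ (then the bound is vacuous) before choosing $\delta=(\|f\|_{M^{p}_{q}}/Mf(x))^{p/n}$.
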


We will use the above two theorems to show other characterizations of Lipschitz functions.

\begin{theor}\label{thm3.2}
Let $0<\alpha<1$, $1\leq q\leq p<\infty$, $1<t\leq s<\infty$, $\frac{1}{s}=\frac{1}{p}-\frac{\alpha}{n}$ and $\frac{1}{t}=\frac{1}{q}-\frac{\alpha}{n}$. The following statements are equivalent:
\begin{enumerate}
\item  $b\in \mathrm{Lip}_{\alpha}$;
\item  $[b,T]$ is a bounded operator from $M^{p}_{q}$ to $WM^{s}_{t}$.
\end{enumerate}
\end{theor}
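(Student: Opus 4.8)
The plan is to mirror the two-sided structure already used for Theorems \ref{thm2.1}--\ref{thm3.1}, so the argument splits into a boundedness half and a converse half, with the new ingredients being the Morrey-space mapping properties of $I_\alpha$ (Theorems \ref{P} and \ref{A}) together with Theorem \ref{main} on the embedding $WM^p_{q_1}\subset M^p_{q_2}$.

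\smallskip
\noindent\emph{Proof of $(1)\Rightarrow(2)$.} Assume $b\in\mathrm{Lip}_\alpha$. The pointwise domination \eqref{eq2}, which holds verbatim here, gives $|[b,T]f(x)|\le\|b\|_{\mathrm{Lip}_\alpha}I_\alpha(|f|)(x)$. Since the target norm $\|\cdot\|_{WM^s_t}$ is dominated by $\|\cdot\|_{M^s_t}$ (this is the trivial inclusion $M^s_t\subset WM^s_t$ noted just before Theorem \ref{main}), it suffices to bound $\|I_\alpha(|f|)\|_{M^s_t}$ by $\|f\|_{M^p_q}$. When $1<q\le p<\infty$ this is exactly Peetre's Theorem \ref{P}, using the hypotheses $1/s=1/p-\alpha/n$ and $1/t=1/q-\alpha/n$. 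The only point requiring a separate word is the endpoint $q=1$ (allowed in the statement but excluded from Theorem \ref{P}): there one argues directly with the weak-type $(1,n/(n-\alpha))$ bound for $I_\alpha$ localized on cubes, which is precisely what the weak target $WM^s_t$ is built to absorb; alternatively one notes $M^p_1\subset M^p_{q_0}$ for any $q_0>1$ with $1/t_0=1/q_0-\alpha/n$ still $>0$, applies Theorem \ref{P} to land in $M^s_{t_0}\subset WM^s_t$, and checks the exponents are consistent. So $\|[b,T]f\|_{WM^s_t}\le C\|b\|_{\mathrm{Lip}_\alpha}\|f\|_{M^p_q}$.

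\smallskip
\noindent\emph{Proof of $(2)\Rightarrow(1)$.} Here I follow the Fourier-series scheme from the proof of Theorem \ref{thm2.1} verbatim: fix $z_0\ne0$, set $\delta=|z_0|/(2\sqrt n)$, expand $1/K(x-y)=\delta^{-n}\sum a_m e^{i\langle v_m,\delta(x-y)/r\rangle}$ on the relevant region, and for cubes $Q=Q(x_0,r)$ and $Q'=Q(x_0-rz_0/\delta,r)$ obtain
$$
\frac{1}{|Q|}\int_Q|b(x)-b_{Q'}|\,dx=\frac{1}{|Q|}\sum a_m\int_{\mathbb R^n}[b,T](g_m)(x)\,h_m(x)\,dx,
$$
with $g_m=e^{-i\langle v_m,\delta y/r\rangle}\chi_{Q'}$, $h_m=e^{i\langle v_m,\delta x/r\rangle}s(x)\chi_Q$. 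Choose $t_1\in(1,t)$. Applying Theorem \ref{main} with exponents $t_1<t\le s$ gives $\|[b,T]g_m\|_{M^s_{t_1}}\le C\|[b,T]g_m\|_{WM^s_t}$, and then the hypothesis $(2)$ together with $\|g_m\|_{M^p_q}\le C|Q'|^{1/q-1/p}=C|Q|^{\alpha/n}$ yields
$$
\frac{1}{|Q|}\int_Q|b(x)-b_{Q}|\,dx\le\frac{2}{|Q|}\int_Q|b(x)-b_{Q'}|\,dx\le\frac{C}{|Q|^{1/s}}\sum|a_m|\,\|[b,T]g_m\|_{M^s_{t_1}}\le C|Q|^{\alpha/n}\|[b,T]\|_{M^p_q\to WM^s_t},
$$
where the power of $|Q|$ is tracked using $1/s=1/p-\alpha/n$ together with $|Q|^{\alpha/n}\cdot|Q|^{-1/s}\cdot|Q|^{-1+1}=|Q|^{\alpha/n-1/s+\alpha/n}$—the bookkeeping is the only slightly fussy part and is done exactly as in the displayed computation proving Theorem \ref{thm3.1}. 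By Lemma \ref{lem3.1} this gives $b\in\mathrm{Lip}_\alpha$ with $\|b\|_{\mathrm{Lip}_\alpha}\le C\|[b,T]\|_{M^p_q\to WM^s_t}$.

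\smallskip
\noindent\emph{Main obstacle.} The genuinely new step—and the only place where the result is not a mechanical transcription—is the converse direction's use of Theorem \ref{main}: one must have $q_1<q_2$ strictly, which forces the choice $t_1\in(1,t)$ and hence requires $t>1$, consistent with the hypothesis $1<t\le s$; and one must verify that the test functions $g_m$ (which are merely characteristic functions of cubes times unimodular factors) have controlled $M^p_q$-norm, for which the scaling $\|\chi_{Q'}\|_{M^p_q}\approx|Q'|^{1/q-1/p}$ is immediate. In the forward direction the only subtlety is the $q=1$ endpoint of Theorem \ref{P}, handled as indicated above. Everything else—the Fourier expansion, the sign trick $s(x)$, and the exponent arithmetic—is identical to the proofs already given for Theorems \ref{thm2.1}, \ref{thm2.2} and \ref{thm3.1}.
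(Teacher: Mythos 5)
Your proposal follows the same two\--sided strategy as the paper: pointwise domination of $[b,T]f$ by $\|b\|_{\mathrm{Lip}_\alpha}I_\alpha(|f|)$ plus Peetre's Theorem \ref{P} for $(1)\Rightarrow(2)$, and the Fourier\--expansion/duality trick combined with Theorem \ref{main} and Lemma \ref{lem3.1} for $(2)\Rightarrow(1)$. Two slips should be corrected. First, in the converse you assert $\|g_m\|_{M^p_q}\le C|Q'|^{1/q-1/p}$; the correct value is $\|\chi_{Q'}\|_{M^p_q}=|Q'|^{1/p}$ (testing on a cube $R$: for $R\subset Q'$ one gets $|R|^{-(1/q-1/p)}|R|^{1/q}=|R|^{1/p}$, maximized at $R=Q'$, and cubes $R\supset Q'$ give less because $1/q-1/p\ge 0$). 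Your formula is already visibly wrong at $q=p$, where it would make $\|\chi_{Q'}\|_{L^p}$ a constant. Fortunately the final exponent is unaffected: $|Q|^{-1/s}\cdot|Q|^{1/p}=|Q|^{\alpha/n}$ by $1/s=1/p-\alpha/n$, which is exactly what Lemma \ref{lem3.1} requires; the ``bookkeeping'' identity you display, $|Q|^{\alpha/n}\cdot|Q|^{-1/s}\cdot|Q|^{-1+1}=|Q|^{\alpha/n-1/s+\alpha/n}$, does not correspond to any actual step and should be discarded.

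Second, your ``alternative'' treatment of the endpoint $q=1$ is backwards: for $q_0>1$ the inclusion is $M^p_{q_0}\subset M^p_1$, not the reverse (a higher local integrability index gives a smaller Morrey space, as the paper notes with $L^p=M^p_p\subset M^p_q$), so you cannot upgrade the source space from $M^p_1$ to $M^p_{q_0}$ for free. The primary route you sketch --- a weak\--type endpoint bound for $I_\alpha$ adapted to Morrey spaces --- is the right one, but it is a genuine theorem rather than a routine localization; the paper invokes Tang's estimate $\|I_\alpha f\|_{WM^s_1}\le C\|f\|_{M^p_1}$ from \cite[Theorem 1.4]{T} at this point. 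With these two repairs your argument coincides with the paper's proof.
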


\begin{theor}\label{thm3.3}
Let $0<\alpha<1$, $1< q\leq p<\infty$, $1<t\leq s<\infty$, $1<l\leq s<\infty$, $\frac{1}{s}=\frac{1}{p}-\frac{\alpha}{n}$, $\frac{1}{t}=\frac{1}{q}-\frac{\alpha}{n}$ and $\frac{l}{s}=\frac{q}{p}$,. The following statements are equivalent:
\begin{enumerate}
\item  $b\in \mathrm{Lip}_{\alpha}$;
\item  $[b,T]$ is a bounded operator from $M^{p}_{q}$ to $M^{s}_{l}$;
\item  $[b,T]$ is a bounded operator from $M^{p}_{q}$ to $M^{s}_{t}$;
\item  $[b,T]$ is a bounded operator from $M^{p}_{q}$ to $WM^{s}_{t}$.
\end{enumerate}
\end{theor}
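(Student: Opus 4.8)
\textbf{Proof plan for Theorem \ref{thm3.3}.}

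The plan is to establish the cycle $(1)\Rightarrow(2)$, $(2)\Rightarrow(3)$, $(3)\Rightarrow(4)$, $(4)\Rightarrow(1)$, reusing machinery already in place. For $(1)\Rightarrow(2)$, I would argue exactly as in the $(1)\Rightarrow(2)$ direction of Theorem \ref{thm3.1}: if $b\in\mathrm{Lip}_{\alpha}$ then the pointwise bound \eqref{eq2}, namely $|[b,T]f(x)|\leq\|b\|_{\mathrm{Lip}_{\alpha}}I_{\alpha}(|f|)(x)$, holds; applying Adams' result (Theorem \ref{A}) with the exponent relations $\frac{1}{s}=\frac{1}{p}-\frac{\alpha}{n}$ and $\frac{l}{s}=\frac{q}{p}$ gives $\|[b,T]f\|_{M^{s}_{l}}\leq C\|b\|_{\mathrm{Lip}_{\alpha}}\|I_{\alpha}(|f|)\|_{M^{s}_{l}}\leq C\|b\|_{\mathrm{Lip}_{\alpha}}\|f\|_{M^{p}_{q}}$. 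For $(2)\Rightarrow(3)$, since $\frac{l}{s}=\frac{q}{p}$ and $q\le p$ force $l\le s$, and since $\frac{1}{t}=\frac{1}{q}-\frac{\alpha}{n}$ with $q\le p$ gives $t\le s$, I must check $t\le l$: from $\frac{l}{s}=\frac{q}{p}$ we get $l=sq/p$, and comparing with $t$ via $\frac1t-\frac1s=\frac1q-\frac1p\ge 0$... actually the cleanest route is to observe $\frac1l-\frac1s=\frac1l(1-\frac{q}{p})$ while $\frac1t-\frac1s=\frac1q-\frac1p$; a direct computation shows $t\le l$ when $q\le p$, so $M^{s}_{l}\subset M^{s}_{t}$ with $\|\cdot\|_{M^{s}_{t}}\le C\|\cdot\|_{M^{s}_{l}}$ by Hölder on each cube, giving $(2)\Rightarrow(3)$. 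For $(3)\Rightarrow(4)$, I would use the trivial embedding $M^{s}_{t}\subset WM^{s}_{t}$ with $\|\cdot\|_{WM^{s}_{t}}\le\|\cdot\|_{M^{s}_{t}}$, already noted before Theorem \ref{main}.

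The substantive implication is $(4)\Rightarrow(1)$, and here I would imitate the $(2)\Rightarrow(1)$ argument of Theorem \ref{thm3.1}, i.e.\ Paluszy\'nski's Fourier-series device. Fix a cube $Q=Q(x_0,r)$, pick the companion cube $Q'=Q(x_0-rz_1,r)$ and the sign function $s(x)$ exactly as in the proof of Theorem \ref{thm2.1}, expand $1/K((x-y)/r)$ in its absolutely convergent Fourier series, and obtain
$$\frac{1}{|Q|}\int_{Q}|b(x)-b_{Q'}|dx=\frac{1}{|Q|}\sum a_{m}\int_{\mathbb{R}^{n}}[b,T](g_{m})(x)h_{m}(x)dx,$$
where $g_m(y)=e^{-i\langle v_m,(\delta/r)y\rangle}\chi_{Q'}(y)$ and $h_m(x)=e^{i\langle v_m,(\delta/r)x\rangle}s(x)\chi_Q(x)$. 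Then choose $t_1\in(1,t)$ and estimate, using Theorem \ref{main} ($WM^{s}_{t}\subset M^{s}_{t_1}$) together with Hölder's inequality to pair $[b,T](g_m)$ against $h_m$ supported on $Q$:
$$\frac{1}{|Q|}\int_{Q}|b(x)-b_{Q'}|dx\le \frac{C}{|Q|^{1/s}}\sum|a_m|\,\|[b,T](g_m)\|_{M^{s}_{t_1}}\le \frac{C}{|Q|^{1/s}}\sum|a_m|\,\|[b,T](g_m)\|_{WM^{s}_{t}},$$
and then $\|[b,T](g_m)\|_{WM^{s}_{t}}\le\|[b,T]\|_{M^{p}_{q}\to WM^{s}_{t}}\|g_m\|_{M^{p}_{q}}\le C|Q|^{1/p}\|[b,T]\|_{M^{p}_{q}\to WM^{s}_{t}}$ since $g_m$ is (modulo a unimodular factor) the characteristic function of a cube of side $r$, whose $M^{p}_{q}$ norm is $|Q|^{1/p}$ up to a constant. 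Combining with $\sum|a_m|<\infty$ and $\frac1p-\frac1s=\frac{\alpha}{n}$ yields
$$\frac{1}{|Q|}\int_{Q}|b(x)-b_{Q}|dx\le \frac{2}{|Q|}\int_{Q}|b(x)-b_{Q'}|dx\le C|Q|^{1/p-1/s}\|[b,T]\|_{M^{p}_{q}\to WM^{s}_{t}}=C|Q|^{\alpha/n}\|[b,T]\|_{M^{p}_{q}\to WM^{s}_{t}},$$
and by Lemma \ref{lem3.1} this gives $b\in\mathrm{Lip}_{\alpha}$ with $\|b\|_{\mathrm{Lip}_{\alpha}}\le C\|[b,T]\|_{M^{p}_{q}\to WM^{s}_{t}}$.

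The main obstacle I anticipate is purely bookkeeping rather than conceptual: verifying the chain of index inequalities needed for the embeddings in $(2)\Rightarrow(3)$ and $(3)\Rightarrow(4)$ to make sense—in particular confirming $1<t\le l\le s$ and that the pair $(s,l)$ indeed falls in the range covered by Adams' theorem, using only the hypotheses $\frac{1}{s}=\frac{1}{p}-\frac{\alpha}{n}$, $\frac{1}{t}=\frac{1}{q}-\frac{\alpha}{n}$, $\frac{l}{s}=\frac{q}{p}$ and $1<q\le p<\infty$. One should also take care that in $(4)\Rightarrow(1)$ the value $t_1\in(1,t)$ can always be chosen because $t>1$ (which holds since $q$ is required to be $>1$ in this theorem, unlike Theorem \ref{thm3.2}), so Theorem \ref{main} applies with $q_1=t_1<q_2=t\le s$. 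Everything else is a direct transcription of the arguments already given for Theorems \ref{thm2.1}, \ref{thm2.2} and \ref{thm3.1}.
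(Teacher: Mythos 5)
Your proposal is correct and follows essentially the same route as the paper: $(1)\Rightarrow(2)$ via the pointwise bound \eqref{eq2} and Adams' theorem, $(2)\Rightarrow(3)$ by verifying $t\le l$ from the exponent relations, $(3)\Rightarrow(4)$ trivially, and $(4)\Rightarrow(1)$ by the Paluszy\'nski argument with Theorem \ref{main}. The only cosmetic difference is that the paper disposes of $(4)\Rightarrow(1)$ by citing Theorem \ref{thm3.2} directly, whereas you re-derive that implication inline; the content is identical.
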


\noindent
{\it Proof of Theorem \ref{thm3.2}.} $(1)\Rightarrow (2)$: For $q>1$, Theorem \ref{P} and \eqref{eq2} yield
$$\|[b,T]f\|_{WM^{s}_{t}}\leq \|b\|_{\mathrm{Lip}_{\alpha}}\|I_{\alpha}(|f|)\|_{WM_{t}^{s}}\leq \|b\|_{\mathrm{Lip}_{\alpha}}\|I_{\alpha}(|f|)\|_{M_{t}^{s}}\leq C\|b\|_{\mathrm{Lip}_{\alpha}}\|f\|_{M^{p}_{q}}.$$
For $q=1$, it follows from \cite[Theorem 1.4]{T} that
$$\|I_{\alpha}(f)\|_{WM_{1}^{s}}\leq C\|f\|_{M^{p}_{q}},$$
which gives
$$\|[b,T]f\|_{WM^{s}_{1}}\leq \|b\|_{\mathrm{Lip}_{\alpha}}\|I_{\alpha}(|f|)\|_{WM_{1}^{s}}\leq C\|b\|_{Lip_{\alpha}}\|f\|_{M^{p}_{q}}.$$

$(2)\Rightarrow (1)$: A same argument as the proof of Theorem \ref{thm2.1} except choosing $q_{1}\in(1,t)$ and replacing \eqref{eq1} by
\begin{eqnarray*}
\frac{1}{|Q|}\int_{Q}|b(x)-b_{Q}|dx&\leq& \frac{2}{|Q|}\int_{Q}|b(x)-b_{Q'}|dx\\
&\leq&\frac{C}{|Q|^{1/s}}\sum |a_{m}|\|[b,T](g_{m})\|_{M^{s}_{q_{1}}}\\
&\leq&\frac{C}{|Q|^{1/s}}\sum |a_{m}|\|[b,T](g_{m})\|_{WM^{s}_{t}}\\
&\leq&C|Q|^{\alpha/n}\|[b,T]\|_{M^p_q\rightarrow WM^s_t}.
\end{eqnarray*}
implies $b\in \mathrm{Lip}_{\alpha}$ due to Lemma \ref{lem3.1}. \qed

\vskip 0.5cm
\noindent
{\it Proof of Theorem \ref{thm3.3}.} $(1)\Rightarrow (2)$: It follows from \eqref{eq2} and Theorem \ref{A} that $[b,T]$ is bounded from $M^{p}_{q}$ to $M^{s}_{l}$.

$(2)\Rightarrow (3)$: Since $\frac{1}{s}=\frac{1}{p}-\frac{\alpha}{n}$, $\frac{1}{t}=\frac{1}{q}-\frac{\alpha}{n}$ and $\frac{l}{s}=\frac{q}{p}$, we have
$$\frac{1}{q}-\frac{1}{t}=\frac{1}{p}-\frac{1}{s}=\frac{q}{p}\Big(\frac{1}{q}-\frac{1}{l}\Big)\leq \frac{1}{q}-\frac{1}{l},$$
and hence $t\leq l$. Then $ \|\cdot\|_{M^{s}_{t}}\leq\|\cdot\|_{M^{s}_{l}}$.

It is obvious for $(3)\Rightarrow (4)$.

$(4)\Rightarrow (1)$: By Theorem \ref{thm3.2}, we obtain $b\in \rm{Lip}_{\alpha}$.
Hence, the proof of Theorem \ref{thm3.3} is completed.
$\hfill$\qed

\color{black}
\vskip 0.5cm

\end{document}